\documentclass[11pt]{amsart}
 \usepackage[dvips]{epsfig}
 \usepackage{comment}
 \usepackage{enumerate}
 \usepackage{amsgen, amstext,amsbsy,amsopn, amsthm, amsfonts,amssymb,amscd,amsmat
 h, euscript, enumerate,url, verbatim, calc,xypic, mathtools}
 \usepackage[foot]{amsaddr}
 \usepackage{mathabx}
\usepackage{mathtools}
\DeclareMathOperator{\Hessian}{Hess}
\DeclareMathOperator{\Tr}{Tr}
 \usepackage{latexsym}
 \usepackage{graphics}
 \usepackage{color}
\usepackage{mathabx}
\newcommand{\proset}{\,\mathrel{\lower 4pt\hbox{$\scriptscriptstyle/$}
\mkern -14mu\subseteq }\,} 

 \newtheorem{theorem}{Theorem}[section]
  \newtheorem{corollary}[theorem]{Corollary}
 \newtheorem{lemma}[theorem]{Lemma}
 \newtheorem{proposition}[theorem]{Proposition}

\newtheorem{notations} {Notations}

\newtheorem{remark}[theorem]{Remark}

 \newtheorem{example}[theorem]{Example}

\numberwithin{equation}{section}

\usepackage{amsmath}
\usepackage{tikz-cd}
\usepackage[ colorlinks=true, linkcolor=blue, citecolor=blue, urlcolor=blue] {hyperref}
\usepackage[backend=biber, style=numeric, sorting=ynt]{biblatex}
\bibliography{mybib}
\makeatother
\title{On Hypersurfaces of Space Forms that are Gradient Yamabe Solitons}
\author{JAHNABI CHAKRABORTI$^1$, ANANDATEERTHA MANGASULI$^2$}
\address{Department of Mathematics\\ Indian Institute of Science Education and Research, Bhopal\\ Madhya Pradesh-462066, India}
\email{$^1$ jahnabi22@iiserb.ac.in, $^2$ anand@iiserb.ac.in}
\keywords{Yamabe gradient solitons; Hypersurfaces of space forms; Scalar curvature; Weak maximum principle}
\date{}
\subjclass[2020]{53C24, 53C25, 53C40, 53C42, 53C44}
\date{}
\begin{document}
\begin{abstract}
    This paper studies nontrivial Yamabe gradient solitons occurring as complete immersed hypersurfaces with constant scalar curvature in space forms $\left(\overline{M}^n(C), \overline{g}\right)$. For solitons with non-vanishing gradients, we establish a structural characterization of the soliton by analyzing a regular level set $\Sigma$ of the soliton function. In particular, imposing a suitable condition on the traceless part of the second fundamental form, $\Phi^{\Sigma}_{\overline{M}}$ of $\Sigma$ as a submanifold of the space form, we prove that the soliton either decomposes as a Riemannian product of $\mathbb{R}$ and a totally umbilical submanifold of the space form, or satisfies a sharp lower bound on $\sup\limits_\Sigma\left|\Phi^{\Sigma}_{\overline{M}}\right|$. Furthermore, we show that the lower bound is attained if, and only if, the soliton is isometric to a Riemannian product of $\mathbb{R}$ and a parallel submanifold of the space form.
\end{abstract}
\maketitle

\section{\textbf{Introduction}}
 Self-similar solutions to the Yamabe flow \cite{HA1}, which evolve by diffeomorphism and scaling, are known as Yamabe solitons. Equivalently, 
A \textit{Yamabe soliton} on a connected, smooth $n$-dimensional manifold $M^n$ is a Riemannian metric $g$ satisfying the equation:
\begin{equation} \label{YS}
    \frac{1}{2} \mathcal{L}_X g = (R - \rho)g,
\end{equation}
where $\mathcal{L}_X$ denotes the Lie derivative along a vector field $X \in \mathcal{X}(M)$, $R$ is the scalar curvature associated with $g$, and $\rho \in 
\mathbb{R}$. A particularly important class arises when the soliton vector field $X$ is the
gradient of a smooth function $f: M \rightarrow \mathbb{R}$. In this setting, $(M^n, g)$ is said to be a \textit{Yamabe gradient soliton}, denoted by 
$(M^n, g, f, \rho)$, and the equation \eqref{YS} reduces to 
\begin{equation}\label{GYS}
    \Hessian f =(R - \rho)g,
\end{equation}
where $\Hessian f$ represents the Hessian of the \textit{soliton function} $f$. Depending on the sign of the scaling parameter $\rho$, a Yamabe gradient soliton is classified 
as \textit{steady} ($\rho = 0$), \textit{shrinking} ($\rho > 0$), or \textit{expanding} ($\rho < 0$). Furthermore, a Yamabe gradient soliton is said to be \textit{trivial} if $f$ is a constant function on $M^n$.

    In 2018, Chen and Deshmukh \cite{chendeshmukh} first examined Yamabe solitons immersed in the Euclidean space, where the soliton vector field $X$ is the tangential component of the Euclidean position vector field. Subsequent to their work, numerous results have been obtained on Yamabe soliton hypersurfaces in space forms under different geometric constraints on the soliton vector field $X$; see, for instance, \cite{sekomaeta}, \cite{fujiimaeta}. Nevertheless, a classification of Yamabe gradient soliton hypersurfaces without such restrictive assumptions on $\nabla f$ remains a primary objective. In 2025, Tokura and Barboza \cite{tokurabarboza} classified complete steady and shrinking Yamabe gradient soliton hypersurfaces with constant mean curvature in space forms. However, the expanding case has not yet been determined. On the other hand, although they fully characterized gradient soliton hypersurfaces with the soliton function having only one critical point, the classification for those without critical points was restricted to steady solitons, and only under the constraints of local conformal flatness and a bounded second fundamental form. Thus, a complete classification for gradient soliton hypersurfaces without critical points remains unresolved.

    In this paper, we present a classification of complete nontrivial Yamabe gradient soliton hypersurfaces isometrically immersed in Space forms with constant scalar curvature and the soliton function having no critical point. While it is established that compact Yamabe gradient solitons necessarily possess constant scalar curvature—and are thus trivial \cite{di} \cite{hsu}—nontrivial gradient solitons with constant scalar curvature do exist, such as the Gaussian soliton. In the context of hypersurfaces of space forms, hyperplanes $\mathbb{R}^{n-1}$ in $\mathbb{R}^n$, horospheres $\mathbb{R}^{n-1}$ in $\mathbb{H}^n$, and cylinders $\mathbb{R}^k\times \mathbb{S}^{n-k-1}$ in $\mathbb{R}^n$ are examples of nontrivial Yamabe gradient solitons with constant scalar curvature. Our main result is as follows:

    \begin{theorem}\label{thm1}
         Let $\left(M^{n-1}, g, f, \rho\right)$ be a complete, oriented, nontrivial Yamabe gradient soliton isometrically immersed into a space form $\left(\overline{M}^n(C), \overline{g}\right), n\ge 6$, with constant scalar curvature. If $f$ has no critical point and a regular level set $\Sigma^{n-2}$ of $f$ has the scalar curvature $R^{\Sigma}>max\left\{(n-2)(n-3)C, 0\right\}$ with
         \begin{equation}\label{hyp}
             \Tr\left(\left(\Phi^{\Sigma}_{\overline{M}}\right)^3\right)\ge -\frac{n-2(k+1)}{\sqrt{(n-2)k(n-k-2)}} \left|\Phi^{\Sigma}_{\overline{M}}\right|^3, \hspace{2mm}\text{for some integer}\hspace{2mm} 1\le k\le n-4,
         \end{equation}
         where $\Phi^{\Sigma}_{\overline{M}}$ defined in \eqref{FE27} is the traceless part of the second fundamental form of the isometric immersion $\Sigma^{n-2} \hookrightarrow \overline{M}^n(C)$.
         Then,
             \begin{enumerate}
                 \item either $\sup\limits_{\Sigma}  \left|\Phi^{\Sigma}_{\overline{M}}\right|^2=0$ and $(M^{n-1}, g)$ is isometric to a Riemannian product of $\mathbb{R}$ and a totally umbilical submanifold of $\overline{M}^n(C)$; 
                 \item or, there exists a unique $\alpha \left(R^{\Sigma}, n-2, k, C\right)>0$ such that
                 \begin{equation*}
                     \sup_{\Sigma}\left|\Phi^{\Sigma}_{\overline{M}}\right|^2\ge \alpha \left(R^{\Sigma}, n-2, k, C\right).
                 \end{equation*}
                  Moreover, the equality $\sup\limits_{\Sigma}\left|\Phi^{\Sigma}_{\overline{M}}\right|^2= \alpha \left(R^{\Sigma}, n-2, k, C\right)$ holds and is attained at some point of $\Sigma^{n-2}$ if, and only if, $\left(M^{n-1}, g\right)$ is isometric to a Riemannian product of $\mathbb{R}$ and a parallel submanifold of $\overline{M}^n(C)$.

             \end{enumerate}
           
    \end{theorem}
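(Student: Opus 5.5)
Since $R$ is constant, \eqref{GYS} says $\Hessian f=\lambda g$ with $\lambda=R-\rho$ constant. I will use this to split $M$ as a product, and then apply a Simons-type argument to the level set $\Sigma$.

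\textbf{Step 1: the warped product and $\lambda=0$.} Because $f$ has no critical points, the standard Tashiro/Cheeger--Colding analysis of $\Hessian f=\lambda g$ gives a warped product. Put $N=\nabla f/|\nabla f|$. From $\Hessian f=\lambda g$ we get $\nabla|\nabla f|^2=2\lambda\nabla f$, so $|\nabla f|$ is constant on level sets, and the integral curves of $N$ are unit-speed geodesics. Completeness then yields $M\cong\mathbb{R}\times\Sigma$ with $g=dt^2+(f'(t)/f'(t_0))^2g_\Sigma$ and $f''=\lambda$.

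I would then show $\lambda=0$. If $\lambda\neq0$, $f'$ is affine in $t$, so $f'$ vanishes somewhere on $\mathbb{R}$. This contradicts the absence of critical points. Hence $f'$ is constant and $M$ is isometric to the Riemannian product $\mathbb{R}\times\Sigma$.

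\textbf{Step 2: extrinsic structure of $\Sigma$.} The factor $\mathbb{R}$ is a flat, parallel direction of $M$, so $R^M=R^\Sigma$, and $R^\Sigma$ is constant. The Gauss equation for $M\hookrightarrow\overline M(C)$, applied to the parallel field $N$, gives $A N=$ (eigenvalue) $N$. I would argue that this forces $N$ to lie in the kernel of $A$, using $R^\Sigma>0$ and $R^\Sigma>(n-2)(n-3)C$ to exclude the alternative.

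I would then identify $\Sigma$ as a codimension-two submanifold of $\overline M(C)$. Its normal bundle is spanned by $N$ and the unit normal $\nu$ of $M$. The totally geodesic $\mathbb{R}$-direction, together with $AN=0$, should show that $\Sigma$ has constant (normalized) scalar curvature and constant, parallel mean curvature direction. Here $\Phi^\Sigma_{\overline M}$ reduces essentially to the traceless part of $A|_\Sigma$.

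\textbf{Step 3: Simons-type inequality and maximum principle.} For a hypersurface-like submanifold with constant scalar curvature, I would use the Cheng--Yau operator $\square=\Tr((nH\,I-A)\nabla^2)$. The Gauss equation gives $|\Phi|^2=(m-1)^2H^2/m\cdot(\ldots)-(R^\Sigma-m(m-1)C)$, in the notation $m=n-2$. A Simons formula then gives
\[
\tfrac12\square|\Phi|^2\ge|\nabla\Phi|^2+|\Phi|^2\Big(|\Phi|^2+\ldots\Big)+(\text{cubic term }\Tr\Phi^3\cdot H).
\]
Hypothesis \eqref{hyp} controls the cubic term, replacing the Okumura inequality, with the constant $k$ selecting the sharp polynomial. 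The result is $\square|\Phi|^2\ge 2|\Phi|^2\,P_{R,m,k,C}(|\Phi|)$, where $P$ is decreasing and has a unique positive root $\alpha$.

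Since $R^\Sigma>0$, $\Sigma$ is compact if it is complete with bounded curvature; otherwise I would use the Omori--Yau weak maximum principle for $\square$, which needs $\square$ to be elliptic. Ellipticity follows from $R^\Sigma>(m)(m-1)C$, as in Alencar--do Carmo--Santos. Evaluating along a maximizing sequence, if $\sup|\Phi|^2<\alpha$ then $P>0$ there, and hence $\sup|\Phi|^2=0$. This is the totally umbilical case (1).

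\textbf{Step 4: equality case.} If the equality $\sup|\Phi|^2=\alpha$ is attained at a point, then $\square|\Phi|^2\ge0$ near the maximum, and the strong maximum principle makes $|\Phi|^2\equiv\alpha$. All inequalities then become equalities, so $\nabla\Phi=0$: $\Sigma$ is parallel. Equality in \eqref{hyp} gives exactly two principal curvatures of multiplicities $k$ and $m-k$. The converse is a direct computation on product-type parallel submanifolds.

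\textbf{Main obstacle.} I expect the hardest part to be Step 3. One must derive, for the codimension-two immersion $\Sigma\hookrightarrow\overline M(C)$, a Simons inequality whose normal-direction terms vanish by Step 2. One must also verify that the root $\alpha$ is unique and that the operator $\square$ is elliptic under the stated curvature bound. The restriction $n\ge6$, that is $1\le k\le m-2$, enters in the sign analysis of $P$.
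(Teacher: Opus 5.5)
Apart from the maximum principle, your plan is the paper's proof, which is itself the Al\'{\i}as--Mel\'endez--Palmas (AMP) argument transplanted to $\Sigma$. Both split $M\cong\mathbb{R}\times\Sigma$ with $R=\rho$. Both use $B^{\Sigma}_{\overline{M}}=A|_{T\Sigma}\,\gamma$, which holds because $\Sigma$ is totally geodesic in $M$. Both combine the Cheng--Yau operator with the Simons formula, using \eqref{hyp} in place of Okumura's lemma. Both use the unique positive root of the AMP polynomial along a maximizing sequence, and Hopf's strong maximum principle to get $\nabla B^{\Sigma}_{\overline{M}}=0$ in the equality case.

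One detail in Step 3: the gradient term is not a $|\nabla\Phi|^2$ term. It is $|\nabla B^{\Sigma}_{\overline{M}}|^2-(n-2)^2|\nabla H^{\Sigma}_{\overline{M}}|^2$, coming from $L((n-2)H^{\Sigma}_{\overline{M}})$. Its sign needs the constant-scalar-curvature inequality $|\nabla B|^2\ge(n-2)^2|\nabla H|^2$ that the paper quotes.

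\textbf{Maximum principle.} You use Omori--Yau for $L$ itself. This is legitimate once $\sup_\Sigma|\Phi^{\Sigma}_{\overline{M}}|<\infty$, the only nontrivial case. Then $|B^{\Sigma}_{\overline{M}}|$ is bounded, so the sectional curvature of $\Sigma$ is bounded below and the trace $(n-2)(n-3)H^{\Sigma}_{\overline{M}}$ of the coefficients of $L$ is bounded. The paper's Proposition~\ref{prop1} only obtains $\Delta|\Phi^{\Sigma}_{\overline{M}}|^2(p_k)<1/k$. It then infers $L(|\Phi^{\Sigma}_{\overline{M}}|^2)(p_k)<c/k$ from $0<(n-2)H^{\Sigma}_{\overline{M}}-k_i<2(n-2)H^{\Sigma}_{\overline{M}}$. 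That inference is invalid: a bound on $\sum_i u_{ii}$ does not bound $\sum_i\lambda_iu_{ii}$ when the positive weights are unequal. What is needed is Hessian control $\Hessian u(p_k)<\frac1k\,g$, which your route supplies, so on this point your version is the sound one.

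\textbf{Step 2 contains a false claim, though nothing later depends on it.} $AN=0$ fails when $C\neq0$. Example~\ref{exam3}, $\mathbb{R}\times\mathbb{S}^{n-2}(s)\subset\mathbb{S}^n$, satisfies every hypothesis, with $R^\Sigma=(n-2)(n-3)/s^2$, yet $AN=-\frac{s}{r}N$. Write $AN=aN+v$ with $v\in T\Sigma$. Then $R^M(\cdot,\cdot)N=0$ and the Gauss equation give $aAX=\langle X,v\rangle(aN+v)-CX$ and $\langle Y,v\rangle AX=\langle X,v\rangle AY$ for $X,Y\in T\Sigma$.
\begin{itemize}
\item If $C\neq0$, then since $\dim\Sigma\ge2$ these force $a\neq0$, $v=0$ and $A|_{T\Sigma}=-\frac{C}{a}\,\mathrm{Id}$. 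Hence $\Phi^{\Sigma}_{\overline{M}}\equiv0$.
\item If $C=0$, then $R^\Sigma>0$ forces $AN=0$. So $N$ is a constant vector of $\mathbb{R}^n$, and $\Sigma$ is a hypersurface of an affine hyperplane orthogonal to $N$.
\end{itemize}
Done correctly, your Step 2 gives far more than Lemma~\ref{lemma1}. Alternative (2) can occur only for $C=0$, and there the statement is exactly AMP's theorem for $\Sigma^{n-2}\subset\mathbb{R}^{n-1}$. Step 3 itself needs only $B^{\Sigma}_{\overline{M}}=A|_{T\Sigma}\,\gamma$ and the Codazzi equation for $A|_{T\Sigma}$, both of which follow from total geodesy of $\Sigma$ in $M$.

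\textbf{Two further problems.} First, your claim that $R^\Sigma>0$ together with completeness and bounded curvature makes $\Sigma$ compact is false; the cylinders $\mathbb{R}^j\times\mathbb{S}^{n-2-j}$ are counterexamples. It is harmless, since you fall back on Omori--Yau anyway.

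Second, the ``if'' half of the equality characterization does not follow from a ``direct computation on product-type parallel submanifolds''. It is in fact false as stated, and the paper does not prove it either. For a parallel $\Sigma$ with $\Phi^{\Sigma}_{\overline{M}}\neq0$, the Simons identity gives $|\Phi^{\Sigma}_{\overline{M}}|^2=\alpha$ exactly when equality holds in \eqref{hyp}. For example, take $k=1$, for which \eqref{hyp} always holds, and $\Sigma=\mathbb{R}^2\times\mathbb{S}^{n-4}\subset\mathbb{R}^{n-1}$. This $\Sigma$ is parallel with $R^\Sigma=(n-4)(n-5)$ and $|\Phi^{\Sigma}_{\overline{M}}|^2=\frac{2(n-4)}{n-2}$, whereas $\alpha=\frac{n-5}{n-2}$. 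Only the ``only if'' direction can be claimed.
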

\begin{remark}
   The hypothesis \ref{hyp} is automatically satisfied when $k=1$; see Lemma 8 in \cite{alias2012}. Different choices of $k$ give rise to distinct structures of the parallel submanifolds $\Sigma^{n-2}$ of $\overline{M}$. In particular, Example \ref{exam1} provides examples of these distinct structures by taking $m=(k+1)\ge 2$.
\end{remark}  
As a consequence of Theorem \ref{thm1} together with \cite{CSY} and \cite{Takeuchi1981}, we obtain the following corollary.
\begin{corollary}
    If the space form $\left(\overline{M}^n(C), \overline{g}\right)$ is simply connected and the isometric immersion $ \Sigma^{n-2}\hookrightarrow \overline{M}^n(C)$ is full then the equality for $\sup\limits_{\Sigma}\left|\Phi^{\Sigma}_{\overline{M}}\right|^2$ holds and is attained at some point of $\Sigma^{n-2}$ if, and only if, $\left(M^{n-1}, g\right)$ is isometric to either
    \begin{enumerate}
        \item a Riemannian product of $\mathbb{R}$ and irreducible symmetric $R$-spaces;
        \item or a Riemannian product of $\mathbb{R}$ and a hypersurface of a simply connected space form $\overline{M}^{n-1}\left(C+h^2\right)$ with constant scalar curvature, where $h>0$. 
    \end{enumerate}
    
\end{corollary}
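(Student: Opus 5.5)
The corollary follows from the equality case of Theorem \ref{thm1} once we know which parallel submanifolds $\Sigma^{n-2}$ of a simply connected space form can occur under the extra hypotheses. By Theorem \ref{thm1}(2), equality holds and is attained at some point of $\Sigma^{n-2}$ if and only if $(M^{n-1},g)$ is isometric to $\mathbb{R}\times\Sigma^{n-2}$ with $\Sigma^{n-2}\hookrightarrow\overline{M}^n(C)$ parallel. So we only need to classify full parallel immersions of $\Sigma$ into a simply connected $\overline{M}^n(C)$ that satisfy the constraints inherited from the theorem: codimension two, $R^{\Sigma}>\max\{(n-2)(n-3)C,0\}$, and $\Phi^{\Sigma}_{\overline{M}}\neq 0$ in case (2).

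\textbf{Step 1: split by the mean curvature.} Since the immersion is parallel, its mean curvature vector $\vec H$ is parallel in the normal bundle, so $|\vec H|=h$ is constant.
\begin{itemize}
\item \emph{Case $h=0$.} $\Sigma$ is a minimal parallel submanifold of the sphere (or of $\mathbb{R}^n$). This case is governed by Ferus/Takeuchi's theory \cite{Takeuchi1981}.
\item \emph{Case $h>0$.} The unit normal $\vec H/h$ is parallel. The codimension-reduction theorem from \cite{CSY} then lets us reduce the codimension by one. Equivalently, the normal bundle splits, and $\Sigma$ sits as a hypersurface in a totally umbilical hypersurface of $\overline{M}^n(C)$, which is a space form $\overline{M}^{n-1}(C+h^2)$.
\end{itemize}
Parallelism of the original immersion forces the induced second fundamental form in $\overline{M}^{n-1}(C+h^2)$ to be parallel as well. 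Hence $\Sigma$ is a hypersurface with constant principal curvatures and, in particular, constant scalar curvature. Simple connectivity of $\overline{M}^n(C)$ is what guarantees the umbilical hypersurface is itself the simply connected model.

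\textbf{Step 2: full, minimal case.} Here fullness excludes the totally geodesic reductions. Since $\Phi\neq0$, $\Sigma$ is not totally umbilical. By Ferus' classification as stated in \cite{Takeuchi1981}, a full parallel submanifold of a sphere or Euclidean space is an extrinsically symmetric submanifold, that is, a standard embedding of a symmetric $R$-space. Fullness together with irreducibility of the normal holonomy excludes product decompositions. In codimension two with $h=0$, a product of symmetric $R$-spaces would need extra normal directions, so $\Sigma$ is an irreducible symmetric $R$-space. This gives item (1).

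\textbf{Step 3: converse.} Each listed model is parallel in $\overline{M}^n(C)$:
\begin{itemize}
\item symmetric $R$-spaces by Ferus;
\item the constant-principal-curvature hypersurfaces of $\overline{M}^{n-1}(C+h^2)$, because composing with a totally umbilical inclusion with parallel $\vec H$ preserves parallelism.
\end{itemize}
The reverse implication in Theorem \ref{thm1}(2) then yields the equality. This gives item (2) together with the converse.

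\textbf{Main obstacle.} The hardest part is the bookkeeping in Step 1 for the case $h>0$. We must justify that a parallel hypersurface of $\overline{M}^{n-1}(C+h^2)$ is exactly what is listed, with only "constant scalar curvature" recorded. We must also check that the curvature hypothesis $R^{\Sigma}>\max\{(n-2)(n-3)C,0\}$ does not rule out either family.
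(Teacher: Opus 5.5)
Your outline (equality in Theorem \ref{thm1} gives $M\cong\mathbb{R}\times\Sigma$ with $\Sigma$ parallel, then a Ferus--Takeuchi classification split by $h=|\vec{H}^{\Sigma}_{\overline{M}}|$) matches the paper's one-line justification. However, the classification steps do not hold.

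\emph{The branch $h=0$ is empty.} The hypothesis $R^{\Sigma}>(n-2)(n-3)C$ and the Gauss equation \eqref{FE4} give $(n-2)^2h^2>|B^{\Sigma}_{\overline{M}}|^2\ge 0$, which is \eqref{FE21}. So Step 2 cannot produce item (1), and the check you flagged as your `main obstacle' already fails here.

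\emph{Step 1 fails in general.} A parallel $\vec H\neq 0$ does not put $\Sigma$ into a totally umbilical hypersurface. That requires a parallel normal field whose shape operator is a constant multiple of the identity, and in general none exists. For example, $\mathbb{S}^p(a)\times\mathbb{S}^q(b)\times\mathbb{S}^r(c)\subset\mathbb{S}^{p+q+r+2}$ with $a^2+b^2+c^2=1$ is full, parallel and of codimension two, has $\vec H\neq 0$ for generic radii, and lies in no small sphere. In this paper \cite{CSY} supplies the structure of the soliton, not a codimension-reduction theorem.

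\emph{The irreducibility argument in Step 2 is wrong.} With $a^2=p/m$, $b^2=q/m$, $c^2=r/m$ and $m=p+q+r$, the same product is full, minimal, parallel, reducible and of codimension two.

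\emph{Step 3 rests on false implications.} Constant principal curvatures do not imply parallel: Cartan's isoparametric hypersurfaces with three distinct principal curvatures are not parallel. Item (2) records only constant scalar curvature, which is weaker still. Also, `isometric to' is intrinsic, whereas the converse in Theorem \ref{thm1} needs the immersion $\Sigma\hookrightarrow\overline{M}^n(C)$ itself to be parallel.

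The idea you are missing is Lemma \ref{lemma1}: $B^{\Sigma}_{\overline{M}}$ takes values in the line spanned by $\gamma$. Hence $\vec H/h=\pm\gamma$, and $A_\gamma$ has traceless part $\Phi^{\Sigma}_{\overline{M}}\neq 0$, so the mean-curvature direction is not umbilical and you reduced along the wrong normal.

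For a parallel submanifold, the first normal space $\mathrm{span}\{B(X,Y)\}$ is a parallel subbundle, because normal parallel transport of $B(X,Y)$ is $B$ of the transported vectors. So in the equality case $\gamma$, and hence $\nu=\nabla f/|\nabla f|$, is normal-parallel, while $A_\nu=0$. Therefore $D_X\nu=-A_\nu X+\nabla^{\perp}_X\nu=0$ along $\Sigma$, and by Erbacher's reduction of codimension $\Sigma$ lies in a totally geodesic $\overline{M}^{n-1}(C)$; it is never full.

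Put differently, by the same reduction theorem a full parallel $\Sigma$ of codimension two has a two-dimensional first normal space. Both of your models have one: a full codimension-two symmetric $R$-space, and a non-umbilical hypersurface of an umbilical $\overline{M}^{n-1}(C+h^2)$, whose second fundamental form $\sigma\,\eta+h\,g\,\xi$ spans two normal directions. Both therefore contradict Lemma \ref{lemma1}.

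Consequently, under the fullness hypothesis the equality case is empty, and the dichotomy (1)/(2) cannot be reached along these lines. What equality actually yields, without fullness, is a parallel hypersurface of a totally geodesic $\overline{M}^{n-1}(C)$.
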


\begin{remark}
   In 2018, Alías, Meléndez, and Palmas \cite{AMP} established a classification for hypersurfaces isometrically immersed in space forms with constant scalar curvature. However, one can show that cases $(ii)(b)$ and $(ii)(c)$ in Theorem $1,.2$ of \cite{AMP} do not admit nontrivial gradient soliton structure. This naturally raises the question of whether there exist hypersurfaces isometrically immersed in the spherical and hyperbolic spaces that admit a nontrivial gradient soliton structure with constant scalar curvature and, if so, what the geometric structure of such hypersurfaces is. We demonstrate that by imposing a gradient soliton structure on the metric in Theorem \ref{thm1}, our approach yields an upper bound on the traceless part of the second fundamental form of the regular level sets of the soliton function $f$, rather than on the totally umbilical tensor of the hypersurface $M^{n-1}$ as characterized in their study. A key difference in our approach is the use of the weak maximum principle for the Laplacian, which replaces the Omori–Yau maximum principle for the Laplacian employed by Alías–Meléndez–Palmas.
\end{remark}

\section{\textbf{Examples}}
\begin{notations}\label{not1}
Let $M^m\hookrightarrow \overline{M}^n$ be an isometric immersion.
\begin{enumerate}
    \item $B^M_{\overline{M}}$:= The second fundamental form.
    
    \item $\vec{H}^M_{\overline{M}}$:= The mean curvature vector.

    \item
    $\left|\vec{H}^M_{\overline{M}}\right|$:= The mean curvature function.

     \item $\Phi^M_{\overline{M}}$:= The traceless part of the second fundamental form, defined when $M$ is a hypersurface (i.e., $m=n-1$).

\end{enumerate}
\end{notations}

\begin{example}\label{exam1}
Consider the generalized cylinder
$\mathbb{R}^{m}\times\mathbb{S}^{n-m-1}$ with $m\ge 1$, immersed in the Euclidean space $\mathbb{R}^{n}$. It admits a nontrivial gradient Yamabe soliton structure
with soliton function
\[
f(x,\theta)=\sum_i a_ix_i+b,
\]
where $x=(x_1,\ldots,x_m)\in\mathbb{R}^{m}$,
$a_i>0 \hspace{2mm}\forall i$ , and $b\in\mathbb{R}$.
Let
\[
\mathbb{R}^{m-1}\times\mathbb{S}^{n-m-1}
\hookrightarrow
\mathbb{R}^{m}\times\mathbb{S}^{n-m-1}
\overset{\gamma}{\hookrightarrow}
\mathbb{R}^{n},
\]
where $\gamma$ denotes the unit normal vector field of the generalized
cylinder in $\mathbb{R}^{n}$, and
$\mathbb{R}^{m-1}\times\mathbb{S}^{n-m-1}$ is identified with the slice
$\{x_1=c\}\times\mathbb{R}^{m-1}\times\mathbb{S}^{n-m-1}$.
The generalized cylinder has principal curvatures
\[
\underbrace{0,\ldots,0}_{m},
\underbrace{1,\ldots,1}_{n-m-1},
\]
so that
\[
B^{\mathbb{R}^{m}\times\mathbb{S}^{n-m-1}}_{\mathbb{R}^{n}}
\left(\frac{\partial}{\partial x_i},
\frac{\partial}{\partial x_j}\right)=0,
\qquad
1\le i,j\le m,
\]
and
\[
\left|
\vec{H}^{\mathbb{R}^{m}\times\mathbb{S}^{n-m-1}}_{\mathbb{R}^{n}}
\right|
=
\frac{n-m-1}{n-1}.
\]
Since
$\mathbb{R}^{m-1}\times\mathbb{S}^{n-m-1}$
is totally geodesic in
$\mathbb{R}^{m}\times\mathbb{S}^{n-m-1}$,
the composition formula for second
fundamental forms implies that
\[
\left|
B^{\mathbb{R}^{m-1}\times\mathbb{S}^{n-m-1}}_{\mathbb{R}^{n}}
\right|^2
=
\left|
B^{\mathbb{R}^{m}\times\mathbb{S}^{n-m-1}}_{\mathbb{R}^{n}} \right|^2
=n-m-1.
\]
On the other hand,
\[
\left|
\vec{H}^{\mathbb{R}^{m-1}\times\mathbb{S}^{n-m-1}}_{\mathbb{R}^{n}}
\right|
=
\frac{n-m-1}{n-2}
\neq
\frac{n-m-1}{n-1}
=
\left|
\vec{H}^{\mathbb{R}^{m}\times\mathbb{S}^{n-m-1}}_{\mathbb{R}^{n}}
\right|.
\]
Consequently,
\[
\left|
\Phi^{\mathbb{R}^{m-1}\times\mathbb{S}^{n-m-1}}_{\mathbb{R}^{n}}
\right|^2
=
\frac{(m-1)(n-m-1)}{n-2},
\]
whereas
\[
\left|
\Phi^{\mathbb{R}^{m}\times\mathbb{S}^{n-m-1}}_{\mathbb{R}^{n}}
\right|^2
=
\frac{m(n-m-1)}{n-1},
\]
where $\Phi^{\mathbb{R}^{m-1}\times\mathbb{S}^{n-m-1}}_{\mathbb{R}^{n}}$ is defined by \eqref{FE27}.
Hence
\[
\left|
\Phi^{\mathbb{R}^{m-1}\times\mathbb{S}^{n-m-1}}_{\mathbb{R}^{n}}
\right|^2
\neq
\left|
\Phi^{\mathbb{R}^{m}\times\mathbb{S}^{n-m-1}}_{\mathbb{R}^{n}}
\right|^2.
\]
\end{example}

\begin{example}
Consider the horosphere $\mathbb{R}^{n-1}$ immersed in the hyperbolic space
$\mathbb{H}^n$. It admits a nontrivial gradient Yamabe soliton structure with
soliton function
\[
f(x)=\sum_{i=1}^{n-1}a_i x_i+b,
\]
where $x_i\in\mathbb{R}$, $a_i>0$ for every
$i=1,\ldots,n-1$, and $b\in\mathbb{R}$.
Let
\[
\mathbb{R}^{n-2}\hookrightarrow
\mathbb{R}^{n-1}
\overset{\gamma}{\hookrightarrow}
\mathbb{H}^n,
\]
where $\gamma$ denotes the unit normal vector field of the horosphere
$\mathbb{R}^{n-1}$ in $\mathbb{H}^n$, and
$\mathbb{R}^{n-2}$ is a hyperplane of $\mathbb{R}^{n-1}$.
Since the horosphere is totally umbilical,
\[
B^{\mathbb{R}^{n-1}}_{\mathbb{H}^n}(X,Y)
=g(X,Y)\gamma.
\]
In particular,
\[
B^{\mathbb{R}^{n-1}}_{\mathbb{H}^n}
\left(\frac{\partial}{\partial r},
\frac{\partial}{\partial r}\right)
=\gamma,
\]
and
\[
|\vec{H}^{\mathbb{R}^{n-1}}_{\mathbb{H}^n}|=1.
\]
Since $\mathbb{R}^{n-2}$ is totally geodesic in
$\mathbb{R}^{n-1}$, the composition formula yields
\[
\left|B^{\mathbb{R}^{n-2}}_{\mathbb{H}^n}\right|^2
=n-2
\neq
n-1
=
\left|B^{\mathbb{R}^{n-1}}_{\mathbb{H}^n}\right|^2,
\]
whereas
\[
|\vec{H}^{\mathbb{R}^{n-2}}_{\mathbb{H}^n}|
=
|\vec{H}^{\mathbb{R}^{n-1}}_{\mathbb{H}^n}|
=1.
\]
Finally, both immersions are totally umbilical. Hence
\[
\Phi^{\mathbb{R}^{n-2}}_{\mathbb{H}^n}\equiv0 \equiv \Phi^{\mathbb{R}^{n-1}}_{\mathbb{H}^n},
\]
where $\Phi^{\mathbb{R}^{n-2}}_{\mathbb{H}^n}$ is defined by \eqref{FE27}.
\end{example}

\begin{example}\label{exam3}
    Consider the complete hypersurface
    $\mathbb{R}\times\mathbb{S}^{n-2}(s)$ immersed in the unit sphere
    $\mathbb{S}^{n}$ through the immersion
    \[
    \phi(t,\theta)
    =
    \left(
    r\cos\frac{t}{r},
    r\sin\frac{t}{r},
    s\theta
    \right),
    \]
    where $\theta\in\mathbb{S}^{n-2}$, $r,s>0$, and
    $r^{2}+s^{2}=1$.  Equivalently, the immersion is the composition
    \[
    \mathbb{R}\times\mathbb{S}^{n-2}(s)
    \xrightarrow{\;\,p\times\mathrm{id}\,\;}
    \mathbb{S}^{1}(r)\times\mathbb{S}^{n-2}(s)
    \hookrightarrow
    \mathbb{S}^{n},
    \]
    where $p:\mathbb{R}\rightarrow\mathbb{S}^{1}(r)$ is the universal covering map.
    It admits a gradient Yamabe soliton structure with soliton function
    \[
    f(t,\theta)=at+b,
    \]
    where $t\in\mathbb{R}$, $a>0$, and $b\in\mathbb{R}$.
Let
    \[
    \mathbb{S}^{n-2}(s)
    \hookrightarrow
    \mathbb{R}\times\mathbb{S}^{n-2}(s)
    \overset{\gamma}{\hookrightarrow}
    \mathbb{S}^{n},
    \]
    where $\gamma$ denotes the unit normal vector field of
    $\mathbb{R}\times\mathbb{S}^{n-2}(s)$ in $\mathbb{S}^{n}$, and
    $\mathbb{S}^{n-2}$ is identified with the slice
    $\{t_{0}\}\times\mathbb{S}^{n-2}$ for some fixed
    $t_{0}\in\mathbb{R}$.
    The hypersurface has principal curvatures
    \[
    -\frac{s}{r},
    \underbrace{\frac{r}{s},\ldots,\frac{r}{s}}_{n-2},
    \]
    so that
    \[
    B^{\mathbb{R}\times\mathbb{S}^{n-2}}_{\mathbb{S}^{n}}
    \left(
    \frac{\partial}{\partial t},
    \frac{\partial}{\partial t}
    \right)
    =
    -\frac{s}{r}\,\gamma,
    \]
    and
    \[
    \left|
    \vec{H}^{\mathbb{R}\times\mathbb{S}^{n-2}}_{\mathbb{S}^{n}}
    \right|
    =
    \left|
    \frac{-\frac{s}{r}+(n-2)\frac{r}{s}}{n-1}
    \right|.
    \]
    Since the slice $\mathbb{S}^{n-2}(s)$ is totally geodesic in
    $\mathbb{R}\times\mathbb{S}^{n-2}(s)$, the composition formula for second
    fundamental forms yields
    \[
    \left|
    B^{\mathbb{S}^{n-2}}_{\mathbb{S}^{n}}
    \right|^{2}
    =
    (n-2)\left(\frac{r}{s}\right)^{2}.
    \]
    Moreover,
    \[
    \left|
    \vec{H}^{\mathbb{S}^{n-2}}_{\mathbb{S}^{n}}
    \right|
    =
    \frac{r}{s},
    \]
    whereas
    \[
    \left|
    \vec{H}^{\mathbb{R}\times\mathbb{S}^{n-2}}_{\mathbb{S}^{n}}
    \right|
    =
    \left|
    \frac{-\frac{s}{r}+(n-2)\frac{r}{s}}{n-1}
    \right|.
    \]
    Hence, in general,
    \[
    \left|
    \vec{H}^{\mathbb{S}^{n-2}}_{\mathbb{S}^{n}}
    \right|
    \neq
    \left|
    \vec{H}^{\mathbb{R}\times\mathbb{S}^{n-2}}_{\mathbb{S}^{n}}
    \right|.
    \]
    Consequently,
    \[
    \left|
    \Phi^{\mathbb{S}^{n-2}}_{\mathbb{S}^{n}}
    \right|^{2}
    =0,
    \]
    while
    \[
    \left|
    \Phi^{\mathbb{R}\times\mathbb{S}^{n-2}}_{\mathbb{S}^{n}}
    \right|^{2}
    =
    \frac{n-2}{n-1}
    \left(
    \frac{r}{s}
    +
    \frac{s}{r}
    \right)^{2},
    \]
where $\Phi^{\mathbb{S}^{n-2}}_{\mathbb{S}^{n}}$ is defined by \eqref{FE27}. Therefore,
    \[
    \left|
    \Phi^{\mathbb{S}^{n-2}}_{\mathbb{S}^{n}}
    \right|^{2}
    \neq
    \left|
    \Phi^{\mathbb{R}\times\mathbb{S}^{n-2}}_{\mathbb{S}^{n}}
    \right|^{2}.
    \]
\end{example}



\begin{example}
    Consider the complete hypersurface
    $\mathbb{H}^{n-2}\left(-\sqrt{1+r^2}\right)\times \mathbb{R}$
    immersed in the hyperbolic space $\mathbb{H}^{n}$ with $r>0$.
   The immersion is the composition
\[
\mathbb{H}^{n-2}\left(-\sqrt{1+r^2}\right)\times\mathbb{R}
\xrightarrow{\;\mathrm{id}\times p\;}
\mathbb{H}^{n-2}\left(-\sqrt{1+r^2}\right)\times\mathbb{S}^{1}(r)
\hookrightarrow
\mathbb{H}^{n},
\]
where $p:\mathbb{R}\to\mathbb{S}^{1}(r)$ is the universal covering map.
    It admits a gradient Yamabe soliton structure with soliton function
    \[
    f(\theta,t)=at+b,
    \]
    where $t\in\mathbb{R}$, $\theta\in
    \mathbb{H}^{n-2}\left(-\sqrt{1+r^2}\right)$,
    $a>0$, and $b\in\mathbb{R}$.\\
Let
    \[
    \mathbb{H}^{n-2}\left(-\sqrt{1+r^2}\right)
    \hookrightarrow
    \mathbb{H}^{n-2}\left(-\sqrt{1+r^2}\right)\times\mathbb{R}
    \overset{\gamma}{\hookrightarrow}
    \mathbb{H}^{n},
    \]
    where $\gamma$ denotes the unit normal vector field of
    $\mathbb{H}^{n-2}\left(-\sqrt{1+r^2}\right)\times\mathbb{R}$
    in $\mathbb{H}^{n}$, and
    $\mathbb{H}^{n-2}$ is identified with the slice
    $\mathbb{H}^{n-2}\left(-\sqrt{1+r^2}\right)\times\{t_0\}$.
The hypersurface has principal curvatures
    \[
    \frac{\sqrt{1+r^2}}{r},
    \underbrace{\frac{r}{\sqrt{1+r^2}},
    \ldots,
    \frac{r}{\sqrt{1+r^2}}}_{n-2},
    \]
    so that
    \[
    B^{\mathbb{H}^{n-2}\times\mathbb{R}}_{\mathbb{H}^{n}}
    \left(
    \frac{\partial}{\partial t},
    \frac{\partial}{\partial t}
    \right)
    =
    \frac{\sqrt{1+r^2}}{r}\,\gamma,
    \]
    and
\[
\left|
\vec H^{\mathbb H^{n-2}\times\mathbb R}_{\mathbb H^n}
\right|
=
\frac{(n-1)r^2+1}
{(n-1)r\sqrt{1+r^2}}.
\]
Since the slice
    $\mathbb{H}^{n-2}\left(-\sqrt{1+r^2}\right)$
    is totally geodesic in
    $\mathbb{H}^{n-2}\left(-\sqrt{1+r^2}\right)\times\mathbb{R}$, the composition formula for second fundamental forms yields
    \[
    \left|
    B^{\mathbb{H}^{n-2}}_{\mathbb{H}^{n}}
    \right|^{2}
    =
    (n-2)
    \frac{r^{2}}{1+r^{2}}.
    \]
Moreover,
    \[
    \left|
    \vec H^{\mathbb{H}^{n-2}}_{\mathbb{H}^{n}}
    \right|
    =
    \frac{r}{\sqrt{1+r^{2}}},
    \]
    whereas
    \[
    \left|
    \vec H^{\mathbb{H}^{n-2}\times\mathbb{R}}_{\mathbb{H}^{n}}
    \right|
    =
    \frac{
    \frac{\sqrt{1+r^2}}{r}
    +(n-2)\frac{r}{\sqrt{1+r^2}}
    }{n-1}.
    \]
Hence, in general,
    \[
    \left|
    \vec H^{\mathbb{H}^{n-2}}_{\mathbb{H}^{n}}
    \right|
    \neq
    \left|
    \vec H^{\mathbb{H}^{n-2}\times\mathbb{R}}_{\mathbb{H}^{n}}
    \right|.
    \]
Consequently,
    \[
    \left|
    \Phi^{\mathbb{H}^{n-2}}_{\mathbb{H}^{n}}
    \right|^{2}
    =
    0,
    \]
    while
    \[
    \left|
    \Phi^{\mathbb{H}^{n-2}\times\mathbb{R}}_{\mathbb{H}^{n}}
    \right|^{2}
    =
    \frac{n-2}
    {(n-1)r^{2}(1+r^{2})},
    \]
    where $\Phi^{\mathbb{H}^{n-2}}_{\mathbb{H}^{n}}$ is defined by \eqref{FE27}.
    Therefore,
    \[
    \left|
    \Phi^{\mathbb{H}^{n-2}}_{\mathbb{H}^{n}}
    \right|^{2}
    \neq
    \left|
    \Phi^{\mathbb{H}^{n-2}\times\mathbb{R}}_{\mathbb{H}^{n}}
    \right|^{2}.
    \]
\end{example}


\section{\textbf{Preliminaries} }
\subsection{Level sets of the soliton function as immersed submanifolds of Space forms}

Let $\left(\overline{M}^n(C), \overline{g}\right)$ denote a space form of constant sectional curvature $C$. We consider a connected, complete, nontrivial Yamabe gradient soliton hypersurface  $(M^{n-1}, g, f, \rho)$ isometrically immersed into $\left(\overline{M}^n(C), \overline{g}\right)$. We assume that the hypersurface $(M^{n-1}, g)$ possesses constant scalar curvature and that the soliton function $f$ admits no critical points on $M^{n-1}$. Furthermore, $M^{n-1}$ is assumed to be oriented by a globally defined unit normal vector field $\gamma$. For any regular level set $\Sigma^{n-2}$ of $f$, we view $\Sigma^{n-2}$ as an immersed submanifold of codimension $2$ in the ambient space $\overline{M}^n(C)$. Following the notation introduced in Notations \ref{not1}, we denote the second fundamental form and the mean curvature vector 
of the isometric immersion $\Sigma^{n-2} \hookrightarrow \overline{M}^n(C)$ by $B^{\Sigma}_{\overline{M}}$ and $\vec{H}^{\Sigma}_{\overline{M}}$
respectively.

\begin{lemma}\label{lemma1}
    The normal bundle of the isometric immersion $\Sigma^{n-2} \hookrightarrow \overline{M}^n(C)$  is spanned by two globally defined linearly independent unit normal vector fields $\left\{ \frac{\nabla f}{|\nabla f|}, \gamma\right\}$ such that the components of $B^{\Sigma}_{\overline{M}}$ and $\vec{H}^{\Sigma}_{\overline{M}}$ along $\frac{\nabla f}{|\nabla f|}$ vanishes.
    
\end{lemma}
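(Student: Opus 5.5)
Since $f$ has no critical points, $\nu:=\nabla f/|\nabla f|$ is a smooth, globally defined unit vector field on $M^{n-1}$. Along $\Sigma^{n-2}=f^{-1}(c)$ it is normal to $\Sigma$ inside $M$. The unit normal $\gamma$ of $M^{n-1}\hookrightarrow\overline{M}^n(C)$ is globally defined because $M$ is oriented, and it is orthogonal to $TM\supset T\Sigma\oplus \mathrm{span}\{\nu\}$. Hence $\{\nu,\gamma\}$ is an orthonormal pair in $T^\perp\Sigma$. The normal bundle of $\Sigma$ in $\overline{M}$ has rank $n-(n-2)=2$, so this pair spans it. This settles the first assertion.

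For the second assertion I will use the composition formula for $\Sigma\hookrightarrow M\hookrightarrow \overline{M}$. For $X,Y\in T\Sigma$,
\begin{equation*}
B^{\Sigma}_{\overline{M}}(X,Y)=B^{\Sigma}_{M}(X,Y)+B^{M}_{\overline{M}}(X,Y),
\end{equation*}
where $B^{\Sigma}_{M}(X,Y)=g(\nabla_XY,\nu)\nu$ up to sign and $B^{M}_{\overline{M}}(X,Y)$ is a multiple of $\gamma$. The $\nu$-component of $B^{\Sigma}_{\overline{M}}$ is therefore exactly the second fundamental form of $\Sigma$ in $M$. For this I will use the soliton equation \eqref{GYS}. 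Since $Yf=0$ on $\Sigma$,
\begin{equation*}
g(\nabla_XY,\nabla f)=X(g(Y,\nabla f))-g(Y,\nabla_X\nabla f)=-\Hessian f(X,Y)=-(R-\rho)g(X,Y).
\end{equation*}
This gives $B^{\Sigma}_{M}(X,Y)=-\frac{R-\rho}{|\nabla f|}g(X,Y)\,\nu$, so $\Sigma$ is totally umbilical in $M$.

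Next I need $R-\rho=0$. This is the step where the constant scalar curvature hypothesis enters, and it is the main obstacle. I will take the trace of \eqref{GYS} and use the standard identity $\mathrm{Ric}(\nabla f)=-(n-2)\nabla(R-\rho)$, which follows from commuting derivatives in $\nabla_i\nabla_j\nabla_k f$ on the $(n-1)$-manifold $M$. With $R$ constant, the function $\lambda:=R-\rho$ is a constant. I then argue by contradiction: suppose $\lambda\neq0$. Then $\nabla f$ is a closed conformal field with $\nabla_X\nabla f=\lambda X$, so $\nabla|\nabla f|^2=2\lambda\nabla f$ and $|\nabla f|^2-2\lambda f$ is constant. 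Along a unit-speed integral curve of $\nu$, which is a geodesic because $\Hessian f(\nu,\cdot)=\lambda g(\nu,\cdot)$, the restriction of $f$ satisfies $f''=\lambda$. Hence $f$ is a nonconstant quadratic in arc length. By completeness the geodesic is defined for all time, so $f'$ must vanish somewhere, and that point is a critical point of $f$. This contradicts the hypothesis, so $\lambda=0$.

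Consequently $\Hessian f=0$, $B^{\Sigma}_{M}\equiv0$, and the $\nu$-component of $B^{\Sigma}_{\overline{M}}$ vanishes identically. Taking traces, the $\nu$-component of $\vec H^{\Sigma}_{\overline{M}}$ vanishes as well. Both $B^{\Sigma}_{\overline{M}}$ and $\vec{H}^{\Sigma}_{\overline{M}}$ therefore point along $\gamma$.
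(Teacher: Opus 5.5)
Your proof is correct and follows essentially the paper's argument. The paper obtains $f''(r)=R-\rho$ and the level-set second fundamental form (its equation (2.6)) from the warped-product structure of \cite{CSY}, rules out $R\neq\rho$ because a constant nonzero $f''$ on the complete line would create a critical point, and then applies the same composition formula; you instead derive each of these facts directly from $\Hessian f=(R-\rho)g$, which makes the argument self-contained. Two small remarks: you should say explicitly that the integral curve of the unit field $\nu$ exists for all time on the complete manifold $M$ (equivalently, along the complete geodesic $\nabla f=(|\nabla f|(p)+\lambda t)\,\sigma'(t)$), and the identity $\mathrm{Ric}(\nabla f)=-(n-2)\nabla(R-\rho)$ is not needed, since constancy of $R$ already makes $\lambda$ constant.
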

    
\begin{proof}
Following the local warped product construction \cite{CSY} of a complete, nontrivial Yamabe gradient soliton $(M^{n-1}, g)$, we observe that the soliton function $f$ depends on the $1$-dimensional Euclidean space $(\mathbb{R}, dr^2)$. So, from the definition of the Yamabe gradient soliton \eqref{GYS}, we can write,
\begin{equation}\label{FE1}
    f^{''}(r)=(R-\rho),
\end{equation}
where $R$ is the scalar curvature of $(M^{n-1}, g)$.\\
Since $R$ is constant and $f$ has no critical point on $M^{n-1}$, using the elliptic equation of the scalar curvature of a Yamabe gradient soliton in \cite[Page 363]{daskalopoulos} and \eqref{FE1}, we get $R=\rho$.\\
Therefore, the normal vector to the regular level set $\Sigma^{n-2}$ of $f$ is,
\begin{equation}\label{FE7}
   \nabla f= c \frac{\partial}{\partial r}, \hspace{2mm}\text{where}\hspace{2mm} c\in \mathbb{R}\setminus \{0\}. 
\end{equation}
Also, 
\begin{equation}\label{FE7A}
    B^{\Sigma}_{\overline{M}}(X, Y)= B^{\Sigma}_M(X, Y)+ B^M_{\overline{M}}(X, Y), \hspace{2mm}\forall X, Y\in \mathcal{X}(\Sigma).
\end{equation}
Hence, in view of \eqref{FE7}, \eqref{FE7A}, and the equation $(2.6)$ of \cite{CSY} we get that  $B^{\Sigma}_{\overline{M}}$ satisfies
\begin{equation*}
    \overline{g}\left( B^{\Sigma}_{\overline{M}}(E_i, E_j), \frac{\nabla f}{|\nabla f|} \right)=0,
\end{equation*}
where $\{E_1, ..., E_{n-2} \}$ is a local orthonormal frame of $T{\Sigma}^{n-2}$.
\end{proof}
In this setting, we choose a local orthonormal frame $\left\{E_1, E_2, ..., E_{n-2}, \frac{\nabla f}{|\nabla f|}, \gamma\right\}$ adapted to the Rieammian metric of $\overline{M}^n(C)$ along with its dual coframe $\{ \omega_1, \omega_2,..., \omega_n\}$ such that, upon restriction to the submanifold $\Sigma^{n-2}$, $\{E_1, E_2, ..., E_{n-2}\}$ are tangent to $\Sigma^{n-2}$. Consequently, by virtue of Lemma \ref{lemma1}, the second fundamental form and the mean curvature vector 
can be expressed as:
    
\begin{equation}
    B^{\Sigma}_{\overline{M}}=\sum_{i, j} B_{ij} \gamma \hspace{2mm} \text{and} \hspace{2mm} \vec{H}^{\Sigma}_{\overline{M}}=\frac{1}{n-2}\sum_iB_{ii} \gamma, 
\end{equation}
 where $B_{ij}=\overline{g}\left(B^{\Sigma}_{\overline{M}}(E_i, E_j), \gamma\right)$.\\
The scalar curvature of $(\Sigma^{n-2}, g_{\Sigma})$ is given by,
\begin{equation}\label{FE4}
    R^{\Sigma}=(n-2)(n-3)C+(n-2)^2 \left|\vec{H}^{\Sigma}_{\overline{M}}\right|^2-\left|B^{\Sigma}_{\overline{M}}\right|^2.
\end{equation}
The traceless part of the second fundamental form is the symmetric tensor
\begin{equation}\label{FE27}
    \Phi^{\Sigma}_{\overline{M}}=\sum\limits_{i, j} \left(\Phi^{\Sigma}_{\overline{M}}\right)_{ij} \omega_i\otimes \omega_j\otimes \gamma, 
\end{equation}
where $\left(\Phi^{\Sigma}_{\overline{M}}\right)_{ij}=B_{ij}-\overline{g}\left(\vec{H}^{\Sigma}_{\overline{M}}, \gamma \right) \delta_{ij}$.
\subsection{Cheng-Yau type operator on \textbf{$\Sigma^{n-2}$}}
 In analogy with the Chang-Yau operator \cite{CY}, we define a linear operator $L:C^2(\Sigma)\rightarrow C^2(\Sigma)$ by 
\begin{equation}\label{FE6}
    L(u)=\sum_{i, j} \left\{\big(n-2)\overline{g}\left(\vec{H}^{\Sigma}_{\overline{M}}, \gamma \right)\delta_{ij}-B_{ij}\right\} u_{ij},
\end{equation}
 where $u_{ij}=\Hessian u(E_i, E_j)$.\\
In what follows, we adapt Lemma $6.1$ from \cite{aliasbook} and Lemma $5$ from \cite{alias2012} to the case where $\Sigma^{n-2}$ is a submanifold of codimension 2 in space forms.
\begin{lemma}\label{lemma2}
    The second fundamental form $B^{\Sigma}_{\overline{M}}$ of $\Sigma^{n-2}$ satisfies the following:
   \begin{equation*}
        \frac{1}{2} \Delta \left|B^{\Sigma}_{\overline{M}}\right|^2=\left|\nabla B^{\Sigma}_{\overline{M}}\right|^2+(n-2)\sum_{i, j} B_{ij} \left\{\overline{g}\left(\vec{H}^{\Sigma}_{\overline{M}}, \gamma \right)\right\}_{ij}+c(n-2)\left|\Phi^{\Sigma}_{\overline{M}}\right|^2+(n-2) \overline{g}\left(\vec{H}^{\Sigma}_{\overline{M}}, \gamma \right)\sum_{i, j, k} B_{ij}B_{jk}B_{ki}-\left|B^{\Sigma}_{\overline{M}}\right|^4,
    \end{equation*}
    where $\left\{\overline{g}\left(\vec{H}^{\Sigma}_{\overline{M}}, \gamma \right)\right\}_{ij}= \Hessian\overline{g}\left(\vec{H}^{\Sigma}_{\overline{M}}, \gamma \right) (E_i, E_j)$.
\end{lemma}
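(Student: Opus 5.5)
The plan is to follow the classical Simons computation for hypersurfaces, as in Lemma 6.1 of \cite{aliasbook}, after reducing the codimension-two immersion $\Sigma^{n-2}\hookrightarrow\overline{M}^n(C)$ to an effectively scalar problem. Lemma \ref{lemma1} says the second fundamental form has a single normal component $B_{ij}$ along $\gamma$. Write $\nu=\nabla f/|\nabla f|$ and $H=\overline{g}(\vec{H}^{\Sigma}_{\overline{M}},\gamma)$. I read the constant $c$ in the statement as the curvature $C$ of the space form. The argument has three steps: show the normal connection is flat in the relevant sense, use Codazzi and Gauss, then contract.

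\textbf{Step 1 (normal connection).} Let $s(X)=\overline{g}(\overline{\nabla}_X\gamma,\nu)$ be the normal connection form. Codazzi holds for $\Sigma$ in the space form, and the $\nu$-component of $B^{\Sigma}_{\overline{M}}$ vanishes identically. Taking the $\nu$-component of Codazzi therefore gives
\[
B_{ij}\,s(E_k)=B_{ik}\,s(E_j)\qquad\text{for all } i,j,k.
\]
I will argue that this forces $\overline{\nabla}^{\perp}\gamma=0$.
\begin{itemize}
\item From the proof of Lemma \ref{lemma1}, $R=\rho$, so $\Hessian f=0$. Hence $\nabla f$ is parallel and $M$ is locally a product $\mathbb{R}\times\Sigma$ with $\Sigma$ totally geodesic in $M$.
\item Consequently $s(X)=\overline{g}(B^M_{\overline{M}}(X,\nu),\gamma)$.
\item Tracing the Codazzi identity above gives $(n-2)H\,s(E_k)=\sum_i B_{ik}\,s(E_i)$, while the identity itself shows $s$ vanishes wherever $B$ has rank at least $2$.
\end{itemize}
The low-rank case I would handle using Codazzi for $M\hookrightarrow\overline{M}$ along the parallel direction $\nu$. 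I expect this step to be the main obstacle, since everything afterwards is formal. If $s$ could not be killed, the formula would acquire extra $|s|^2$ terms.

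\textbf{Step 2 (Laplacian of $B$).} With $\gamma$ parallel in the normal bundle, the $B_{ij}$ behave like the components of a hypersurface second fundamental form, and
\[
\tfrac12\Delta|B|^2=|\nabla B|^2+\sum_{i,j}B_{ij}\,\Delta B_{ij}.
\]
Codazzi makes $B_{ijk}$ totally symmetric, so $\Delta B_{ij}=\sum_k B_{kkij}+(\text{commutator terms})$. Here $\sum_k B_{kkij}=(n-2)H_{ij}$. The commutators are evaluated through the Ricci identity with the Gauss equation
\[
R_{ijkl}=C(\delta_{ik}\delta_{jl}-\delta_{il}\delta_{jk})+B_{ik}B_{jl}-B_{il}B_{jk},
\]
with no normal-curvature contribution, since the shape operator along $\nu$ is zero.

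\textbf{Step 3 (contraction).} Contracting with $B_{ij}$ yields three pieces:
\begin{itemize}
\item the constant-curvature part gives $C\big((n-2)|B|^2-(n-2)^2H^2\big)=(n-2)C|\Phi^{\Sigma}_{\overline{M}}|^2$;
\item the quadratic part gives $(n-2)H\sum_{i,j,k}B_{ij}B_{jk}B_{ki}-|B|^4$;
\item the leading term gives $(n-2)\sum_{i,j}B_{ij}H_{ij}$.
\end{itemize}
Collecting these terms gives the stated identity.
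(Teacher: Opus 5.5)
Your Steps 2 and 3 are the paper's proof. The paper uses the Ricci identity for $B_{ijk}$, Codazzi to reduce $\sum_k B_{ijkk}$ to $(n-2)\{\overline{g}(\vec{H}^{\Sigma}_{\overline{M}},\gamma)\}_{ij}$ plus curvature terms, then \eqref{FE2}, \eqref{FE1a}, and contraction. Your bookkeeping of the three pieces is right, and so is reading $c$ as $C$.

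The problem is Step 1, which you treat as the crux.

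\textbf{Step 1 is unnecessary.} Since $\overline{g}(B^{\Sigma}_{\overline{M}},\nu)\equiv 0$, the connection term in the $\gamma$-component of the codimension-two Codazzi equation is
\[
\overline{g}\big(B^{\Sigma}_{\overline{M}}(Y,Z),\nabla^{\perp}_X\gamma\big)=s(X)\,\overline{g}\big(B^{\Sigma}_{\overline{M}}(Y,Z),\nu\big)=0 .
\]
This vanishes identically, so the scalar tensor $B_{ij}$ satisfies $B_{ijk}=B_{ikj}$ whatever $s$ is. Equivalently, $B_{ij}$ is the second fundamental form of $M$ restricted to the totally geodesic slice $\Sigma$, and Codazzi for $M\hookrightarrow\overline{M}$ restricts to it. The Gauss equation also involves only $B_{ij}$. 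Hence the hypersurface Simons computation applies verbatim with $|\nabla B^{\Sigma}_{\overline{M}}|^2=\sum_{i,j,k}B_{ijk}^2$, which is how the paper implicitly reads it. An $|s|^2$ term appears only if you read $|\nabla B^{\Sigma}_{\overline{M}}|^2$ as the norm of the normal-bundle-valued derivative $\nabla^{\perp}B^{\Sigma}_{\overline{M}}$, which equals $\sum_{i,j,k}B_{ijk}^2+\big(\sum_{i,j}B_{ij}^2\big)|s|^2$.

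\textbf{The rank $\le 1$ case of Step 1 cannot be carried out}, because $s$ genuinely need not vanish there. Take $M=c\times\mathbb{R}^{n-2}\subset\mathbb{R}^2\times\mathbb{R}^{n-2}$, where $c$ is a parabola parametrized by arclength $t$ with curvature $\kappa(t)\neq 0$. Then:
\begin{itemize}
\item $M$ is isometric to $\mathbb{R}^{n-1}$, and $\overline{g}(B^{M}_{\overline{M}},\gamma)=\kappa(t)\,dt^2$.
\item $f=t\cos\theta+y_1\sin\theta$ with $0<\theta<\pi/2$ is a complete, nontrivial steady soliton with $R=0$ and no critical points.
\item For the unit vector $X=-\sin\theta\,\partial_t+\cos\theta\,\partial_{y_1}$ tangent to $\Sigma$, you get $|s(X)|=|\kappa|\sin\theta\cos\theta\neq 0$.
\item $B_{ij}=b\,\delta_{i1}\delta_{j1}$, with $E_1=X$ and $b=\kappa\sin^2\theta$, has rank one.
\end{itemize}
Yet the lemma holds in this example: both sides equal $(b')^2+bb''$, where $'$ is differentiation along $X$. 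So no Codazzi argument for $M$ can kill $s$ in the rank-one case, and you should simply drop Step 1.

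Your rank observation is still worth keeping, just not here. Under $R^{\Sigma}>(n-2)(n-3)C$, \eqref{FE4} gives $\big(\sum_i B_{ii}\big)^2>\sum_{i,j}B_{ij}^2$. This forces rank $\ge 2$ everywhere, hence $s\equiv 0$. That is exactly what is needed at the end of Theorem \ref{thm1} to pass from $B_{ijk}=0$ to $\Sigma$ being a parallel submanifold, a point the paper leaves implicit.
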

\begin{proof}
    Considering $\frac{1}{2} \Delta \left|B^{\Sigma}_{\overline{M}}\right|^2= \sum\limits_{i, j} B_{ij} \Delta B_{ij}+ \sum\limits_{i, j, k} (B_{ijk})^2$ and using the Codazzi equation together with the Ricci formula \cite{cheng} we get,
    \begin{align}\label{FE3}
        \frac{1}{2} \Delta \left|B^{\Sigma}_{\overline{M}}\right|^2 &= \sum_{i, j, k} B_{ij} B_{kijk}+ \left|\nabla B^{\Sigma}_{\overline{M}}\right|^2 \nonumber\\
                                          &= \left|\nabla B^{\Sigma}_{\overline{M}}\right|^2+ \sum_{i, j, k} B_{ij} \left\{B_{kikj}+\sum_m B_{mi} R^{\Sigma}_{mkjk}+\sum_m B_{km} R^{\Sigma}_{mijk}\right\} \nonumber\\
                                          &= \left|\nabla B^{\Sigma}_{\overline{M}}\right|^2+ \sum_{i, j, k} B_{ij} B_{kikj}+\sum_{i, j, m} B _{ij} B_{mi} R^{\Sigma}_{mj}+\sum_{i, j, k, m} B_{ij} B_{km} R^{\Sigma}_{mijk},
    \end{align}
    where $ R^{\Sigma}_{ijkm} $ and $R^{\Sigma}_{mj}$ are the components of the Riemannian curvature tensor and the Ricci tensor of $(\Sigma^{n-2}, g_{\Sigma})$, respectively.\\
    We have,
    \begin{equation}\label{FE2}
        R^{\Sigma}_{mijk}= C (\delta_{mj} \delta_{ik}- \delta_{mk} \delta_{ij})+(B_{mj} B_{ik}- B_{mk} B_{ij}),
    \end{equation}
    and
    \begin{equation}\label{FE1a}
        R^{\Sigma}_{mj}= (n-3) C \delta_{jm}+ (n-2) \overline{g}\left(\vec{H}^{\Sigma}_{\overline{M}}, \gamma \right) B_{jm}- \sum_l B_{lj} B_{ml}.
    \end{equation}
    Substituting \eqref{FE1a} and \eqref{FE2} into \eqref{FE3} and then applying the Codazzi equation, we obtain,
    \begin{equation*}
        \frac{1}{2} \Delta \left|B^{\Sigma}_{\overline{M}}\right|^2= \left|\nabla B^{\Sigma}_{\overline{M}}\right|^2+ \sum_{i, j, k} B_{ij} B_{kkij}+ C(n-2) \left|\Phi^{\Sigma}_{\overline{M}}\right|^2-\left|B^{\Sigma}_{\overline{M}}\right|^4+(n-2) \overline{g}\left(\vec{H}^{\Sigma}_{\overline{M}}, \gamma \right)  \sum_{i, j, k} B_{ij} B_{jk} B_{ki}.
    \end{equation*}
\end{proof}

\begin{lemma}\label{lemma3}
    If the scalar curvature of $\Sigma$, $R^{\Sigma}>(n-2)(n-3) C$ then the linear operator $L$ is elliptic.
\end{lemma}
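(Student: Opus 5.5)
This is the codimension-$2$ analogue of the classical Cheng--Yau / Alías argument, so I will follow that route. Write $H:=\overline{g}\left(\vec{H}^{\Sigma}_{\overline{M}},\gamma\right)$. The operator $L$ in \eqref{FE6} has coefficient matrix $P_{ij}=(n-2)H\delta_{ij}-B_{ij}$. Ellipticity of $L$ means that $P$ is positive definite at every point of $\Sigma$, so the plan is to show that every eigenvalue of $P$ is strictly positive under the hypothesis on $R^{\Sigma}$.

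Fix a point of $\Sigma$ and diagonalize the symmetric matrix $(B_{ij})$ by choosing the frame $E_1,\dots,E_{n-2}$ so that $B_{ij}=\lambda_i\delta_{ij}$. By Lemma \ref{lemma1}, $B^{\Sigma}_{\overline{M}}$ has only a $\gamma$-component, so this single symmetric matrix captures the whole second fundamental form. In this frame the eigenvalues of $P$ are $\mu_i=(n-2)H-\lambda_i=\sum_{j\ne i}\lambda_j$. Rewriting \eqref{FE4} with $(n-2)^2H^2-|B|^2=\left(\sum_j\lambda_j\right)^2-\sum_j\lambda_j^2=2\sum_{j<l}\lambda_j\lambda_l$, the hypothesis $R^{\Sigma}>(n-2)(n-3)C$ becomes
\[
2\sum_{j<l}\lambda_j\lambda_l \;=\; R^{\Sigma}-(n-2)(n-3)C \;>\; 0 .
\]
In particular $(n-2)^2H^2>|B|^2\ge 0$, so $H\neq 0$ at every point. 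Since $\Sigma$ is connected and $H$ is continuous, $H$ keeps a fixed sign on $\Sigma$, and after replacing $\gamma$ by $-\gamma$ if necessary we may take $H>0$.

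Next I isolate $\lambda_i$ in the positive quantity above:
\[
0<2\sum_{j<l}\lambda_j\lambda_l=2\lambda_i\mu_i+2\sum_{\substack{j<l\\ j,l\ne i}}\lambda_j\lambda_l\le 2\lambda_i\mu_i+\mu_i^2 .
\]
The last step uses $2\sum_{j<l,\;j,l\neq i}\lambda_j\lambda_l=\mu_i^2-\sum_{j\neq i}\lambda_j^2\le\mu_i^2$. It follows that $\mu_i(\mu_i+2\lambda_i)>0$, so $\mu_i\neq 0$. Moreover, $\mu_i$ and $\mu_i+2\lambda_i$ have the same sign. Now $\mu_i+(\mu_i+2\lambda_i)=2\sum_j\lambda_j=2(n-2)H>0$, so two nonzero numbers of the same sign with positive sum must both be positive. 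Hence $\mu_i>0$ for every $i$, the matrix $P$ is positive definite, and $L$ is elliptic.

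The step I expect to need care is the sign normalization. The choice of orientation $\gamma$ must make $H>0$ globally, and this relies on $H$ never vanishing, which in turn relies on the strict inequality in the hypothesis. If the orientation of $\gamma$ is already fixed by $M$, I would instead note that $-L$ is elliptic on $\Sigma$ when $H<0$, and either convention is enough for the maximum-principle arguments that follow. Everything else in the argument is pointwise linear algebra.
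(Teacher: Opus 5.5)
Your proof is correct and is essentially the paper's argument in factored form: since $\mu_i(\mu_i+2\lambda_i)=\left((n-2)H-\lambda_i\right)\left((n-2)H+\lambda_i\right)=(n-2)^2H^2-\lambda_i^2$, your key inequality is just $\lambda_i^2\le\left|B^{\Sigma}_{\overline{M}}\right|^2<(n-2)^2H^2$, which the paper reads off directly from \eqref{FE4} as $|k_i|<(n-2)H$ once $\gamma$ is oriented so that $H>0$. Your remark that $H$ never vanishes and so has constant sign on the connected $\Sigma$, so that one global flip of $\gamma$ suffices, spells out the orientation step the paper treats only briefly.
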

\begin{proof}
    Since $R^{\Sigma}>(n-2)(n-3) C$ it follows from \eqref{FE4} that 
    \begin{equation}\label{FE21}
        \left|\vec{H}^{\Sigma}_{\overline{M}}\right|^2>0
    \end{equation}and
    \begin{equation} \label{FE5}
        (n-2)^2\left|\vec{H}^{\Sigma}_{\overline{M}}\right|^2\ge {k_i}^2,
    \end{equation}
    where $k_i$'s are the eigenvalues of the self-adjoint linear map associated with the bilinear form $B_{\gamma}(X, Y)= \overline{g}\left(B^{\Sigma}_{\overline{M}}(X, Y), \gamma\right)$.\\
    Now, $\left(\vec{H}^{\Sigma}_{\overline{M}}\right)^2=\left\{\overline{g}\left(\vec{H}^{\Sigma}_{\overline{M}}, \gamma \right)\right\}^2$. Therefore, \eqref{FE21} implies $\overline{g}\left(\vec{H}^{\Sigma}_{\overline{M}}, \gamma \right)>0$ or $\overline{g}\left(\vec{H}^{\Sigma}_{\overline{M}}, \gamma \right)<0$. Given $(M^n, g)$ is an oriented hypersurface of $\overline{M}^n(C)$, we can choose the orientation such that $\overline{g}\left(\vec{H}^{\Sigma}_{\overline{M}}, \gamma \right)>0$.\\
    Hence, \eqref{FE5} implies that $0<(n-2)\overline{g}\left(\vec{H}^{\Sigma}_{\overline{M}}, \gamma \right)-k_i<2(n-2)\left|\vec{H}^{\Sigma}_{\overline{M}}\right|$, where $(n-2)\overline{g}\left(\vec{H}^{\Sigma}_{\overline{M}}, \gamma \right)-k_i$'s are the eigenvalues of the operator $L$ defined in \eqref{FE6}.
\end{proof}
 \subsection{Weak maximum principle for the operator $L$}
To prove Theorem \ref{thm1}, we need a weak maximum principle for the linear operator $L$ as follows.
\begin{proposition}\label{prop1}
      If the scalar curvature of $\Sigma^{n-2}$, $R^{\Sigma}>(n-2)(n-3) C$ and $\sup\limits_{\Sigma}\left|\Phi^{\Sigma}_{\overline{M}}\right|^2<+\infty$ then there exists a sequence of points $\{p_k\}_{k}\subset \Sigma^{n-2}$ such that for every $k\in \mathbb{N}$ the following holds:
      \begin{equation}
          \left|\Phi^{\Sigma}_{\overline{M}}\right|^2(p_k)>\sup_{\Sigma} \left|\Phi^{\Sigma}_{\overline{M}}\right|^2-\frac{1}{k} \hspace{2mm} \text{and} \hspace{2mm} L\left(\left|\Phi^{\Sigma}_{\overline{M}}\right|^2\right)(p_k)<\frac{c}{k},
      \end{equation}
      for a fixed constant $c>0$.
\end{proposition}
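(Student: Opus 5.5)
The plan is to reduce Proposition \ref{prop1} to the weak maximum principle for the Laplacian on $\Sigma^{n-2}$, using the structure of the soliton. By the proof of Lemma \ref{lemma1}, $R=\rho$, so $\Hessian f=0$ on $M^{n-1}$. Hence $\nabla f$ is parallel, $|\nabla f|$ is constant, and by de Rham (using completeness) $M^{n-1}$ splits isometrically, at least on the universal cover, as $\mathbb{R}\times\Sigma^{n-2}$. In particular every regular level set $\Sigma^{n-2}$ is totally geodesic in $M^{n-1}$ and complete with the induced metric.

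The first step is to establish completeness and a Ricci lower bound for $\Sigma^{n-2}$. From \eqref{FE1a}, in a frame diagonalizing $B_\gamma$,
\[
\mathrm{Ric}^{\Sigma}(E_i,E_i)=(n-3)C+k_i\big((n-2)\overline{g}(\vec{H}^{\Sigma}_{\overline{M}},\gamma)-k_i\big).
\]
If we write $H=\overline{g}(\vec{H}^{\Sigma}_{\overline{M}},\gamma)$, then $|\Phi|^2=|B|^2-(n-2)H^2$, and \eqref{FE4} gives $(n-2)(n-3)H^2=R^{\Sigma}-(n-2)(n-3)C+|\Phi|^2$. The scalar curvature $R^{\Sigma}$ is constant, because $M=\mathbb{R}\times\Sigma$ has constant scalar curvature. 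Therefore $\sup|\Phi|^2<\infty$ forces $H$, $|B|$ and all $k_i$ to be bounded, and so $\mathrm{Ric}^{\Sigma}$ is bounded below. The same bounds show that the coefficients of $L$ are bounded: by Lemma \ref{lemma3} its eigenvalues $(n-2)H-k_i$ lie in $(0,2(n-2)H]$, so $\Tr P\le (n-2)^2\sup H=:c_0$, where $P=(n-2)H\,I-B_\gamma$.

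Next we apply the Omori--Yau maximum principle to $u=|\Phi|^2$, which is bounded above. Since $\Sigma$ is complete with Ricci curvature bounded below, there exist points $p_k$ with
\[
u(p_k)>\sup u-\tfrac1k,\qquad |\nabla u|(p_k)<\tfrac1k,\qquad \Hessian u(p_k)(X,X)<\tfrac1k|X|^2 .
\]
At $p_k$ we diagonalize $B_\gamma$. Because $P$ is positive definite, we get
\[
L(u)(p_k)=\sum_i\big((n-2)H-k_i\big)u_{ii}<\tfrac1k\Tr P\le \tfrac{c_0}{k},
\]
which is the claim with $c=c_0$. If one prefers the genuinely weak version, the Hessian bound can be replaced by the fact that weak (trace-type) maximum principles hold for any operator $\Tr(P\,\Hessian)$ with $P$ bounded and positive semidefinite, provided $\Sigma$ carries a proper function $\varphi$ with $L\varphi\le$ const outside a compact set. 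Such a function is $\varphi=$ distance squared, via Hessian comparison under the Ricci lower bound, or more simply via the sectional curvature bound coming from \eqref{FE2} and bounded $B$.

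The main obstacle is justifying completeness of $\Sigma^{n-2}$ and the uniform curvature bounds. Completeness follows from the splitting: a regular level set is a closed, totally geodesic slice of the complete manifold $M$. The bound on $H$ from $\sup|\Phi|^2$ requires $n\ge4$ and the constancy of $R^{\Sigma}$; I would verify this constancy carefully from the product structure before invoking Omori--Yau.
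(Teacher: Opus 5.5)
Your proof is correct, but it takes a different route from the paper's.

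\textbf{The paper's route.} The paper works only at the level of the Laplacian. It handles compact $\Sigma$ separately, via $\beta=r^2$ on $M\cong\mathbb{R}\times\Sigma$. For noncompact $\Sigma$ it uses the Ricci lower bound from \eqref{FE1a}, \eqref{FE24}, \eqref{FE25} to get $\Delta d\le E+F\,d$, and hence the weak maximum principle for $\Delta$ from \cite{aliasbook}. This yields $\Delta|\Phi^{\Sigma}_{\overline{M}}|^2(p_k)<1/k$. It then appeals to Claim 1 (the eigenvalues of $P=(n-2)H\,I-B_\gamma$ are positive and bounded by $2(n-2)\sup_\Sigma H$) to pass to $L$.

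\textbf{Your route.} You use the Omori--Yau principle for the Hessian, so $\Hessian u(p_k)<\frac1k g$. Positivity of $P$ then gives $L(u)(p_k)<\frac1k\Tr P$ at once, with no compact/noncompact case split.

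\textbf{What your route buys.} The gain is exactly this final step. An upper bound on $\Tr(\Hessian u)$, together with bounded positive eigenvalues of $P$, does not by itself bound $\Tr(P\,\Hessian u)$ when $P$ is not a multiple of the identity; a Hessian with eigenvalues $\pm\Lambda$ already defeats it. So Hessian-level control is what actually delivers the estimate for $L$. A weak maximum principle applied directly to the trace operator $L$, as in your alternative, would also do it.

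\textbf{What it costs.} You need a stronger curvature input. The Hessian version of Omori--Yau, like the Hessian comparison you mention for $\varphi=d^2$, requires a lower bound on sectional curvature, not merely on Ricci curvature. So you should invoke it that way rather than citing the Ricci bound. This is available here at no cost: $|B^{\Sigma}_{\overline{M}}|$ is bounded, and the Gauss equation \eqref{FE2} then bounds all sectional curvatures of $\Sigma$.

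\textbf{A small fix.} Your constant $c_0$ is best justified by the exact identity $\Tr P=(n-2)(n-3)H$. The eigenvalue bound from Lemma~\ref{lemma3} alone only gives $\Tr P\le 2(n-2)^2H$.
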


\begin{proof}
    We choose a local orthonormal frame $\{E_1, E_2, ..., E_{n-2}\}$ on $\Sigma^{n-2}$ such that \eqref{FE6} can be written as
    \begin{equation}\label{FE11}
        L(\left|\Phi^{\Sigma}_{\overline{M}}\right|^2)= \sum_i \left\{(n-2)\overline{g}\left(\vec{H}^{\Sigma}_{\overline{M}}, \gamma \right)-k_i\right\} \left(\left|\Phi^{\Sigma}_{\overline{M}}\right|^2\right)_{ii},
    \end{equation}
    where $k_i$'s are the eigenvalues of the self-adjoint linear map associated with the bilinear form $B_{\gamma}(X, Y)= \overline{g}\left(B^{\Sigma}_{\overline{M}}(X, Y), \gamma\right)$.\\
 Claim 1: The term in \eqref{FE11}, $(n-2)\overline{g}\left(\vec{H}^{\Sigma}_{\overline{M}}, \gamma \right)-k_i>0$ and bounded.\\
  Given that $R^{\Sigma}> (n-2)(n-3)C$, \eqref{FE4} yields
 \begin{align}\label{FE24}
     & |k_i|<(n-2)\overline{g}\left(\vec{H}^{\Sigma}_{\overline{M}}, \gamma \right) \hspace{2mm}\forall i,\\
     \implies& 0<(n-2)\overline{g}\left(\vec{H}^{\Sigma}_{\overline{M}}, \gamma \right)-k_i< 2(n-2)\overline{g}\left(\vec{H}^{\Sigma}_{\overline{M}}, \gamma \right) \nonumber \hspace{2mm} \forall i.
 \end{align}
 Since $\left|\Phi^{\Sigma}_{\overline{M}}\right|^2<A$, for some $A\in \mathbb{R}$ therefore from \eqref{FE4} we have,
 \begin{align}\label{FE25}
     &(n-2)(n-3) \left\{\overline{g}\left(\vec{H}^{\Sigma}_{\overline{M}}, \gamma \right)\right\}^2+{(n-2)(n-3)C-R^{\Sigma}}<A, \nonumber \\
     \implies & \left\{\overline{g}\left(\vec{H}^{\Sigma}_{\overline{M}}, \gamma \right)\right\}^2<\frac{A+\left\{R^{\Sigma}-(n-2)(n-3)C\right\}}{(n-2)(n-3)}.
 \end{align}
 Since the scalar curvature $R^{\Sigma}$ is constant by \cite{CSY}, claim $1$ follows.\\
Claim $2$: The Weak maximum principle for the Laplacian \cite{aliasbook} holds on $\Sigma^{n-2}$.\\
Case I: $\Sigma^{n-2}$ is compact.\\
Due to \eqref{FE7} and the structural result given by \cite{CSY} the complete nontrivial Yamabe gradient soliton $(M^{n-1}, g)$ is isometric to the Riemannian product $(\mathbb{R}, dr^2)\times (\Sigma^{n-2}, g_{\Sigma})$.
Construct a smooth function $\beta: M\rightarrow \mathbb{R}$ by 
\begin{equation}\label{FE26}
    \beta(r, \theta)=r^2,
\end{equation}
where $r\in \mathbb{R}$ and $\theta\in \Sigma^{n-2}$.\\
Since $\Sigma^{n-2}$ is compact, $\beta(x)\rightarrow +\infty$ as $x=(r, \theta)\rightarrow \infty$.\\
\begin{equation*}
    \Delta \beta=2 \hspace{2mm} \forall x\in M.
\end{equation*}
Therefore, 
\begin{equation*}
    \Delta \beta\le \beta \hspace{2mm} \forall x\in M\setminus K,
\end{equation*}
where $K=[-\sqrt 2, \sqrt 2]\times \Sigma^{n-2}$, which is a compact subset of $M^{n-1}$.\\
Hence, by Theorem $2.8$ and Theorem $2.9$ of \cite{aliasbook} the weak maximum principle holds for the Laplacian on $M^{n-1}$.\\
Consider the smooth function $|\Phi|:M\rightarrow \mathbb{R}$ defined by
\begin{equation}
    |\Phi|=|\Phi^{\Sigma}_{\overline{M}}|\circ \eta,
\end{equation}
where $\eta: M\rightarrow \Sigma$ is the projection map.\\
We apply the weak maximum principle to $\Delta (|\Phi|^2)$ to get the required result.\\
Case II: $(\Sigma^{n-2}, g_{\Sigma})$ is complete and  noncompact.\\
Fix a point $p\in \Sigma^{n-2}$ and consider the distence fnction $d: \Sigma \rightarrow \mathbb{R}$ as follows:
\begin{equation*}
    d(x)=d(x,p), \hspace{2mm} \text{where $d(x, p)$ is the geodesic distance between $x$ and $p$.}
\end{equation*}
The function $d$ is smooth on $\Sigma\setminus \text{Cut}(p)$, where $\text{Cut}(p)$ is the cut locus of $p$ on $\Sigma$.
It is well known that on $\Sigma^{n-2}\setminus \text{Cut}(p)$   ,
\begin{equation}
    \Hessian (d)(X, X)(x)= \int_0^{d(x)} \left\{|\tilde{X}'(t)|^2-R^{\Sigma}\left( \tilde{X}(t), \sigma'(t), \sigma'(t), \tilde{X}(t)\right)\right\} dt,
\end{equation}
where $\tilde{X}$ is the Jacobi field along the minimal geodesic $\sigma$ satisfying $\tilde{X}(0)=0$ and $\tilde{X} (d(x))=X$. Choose a parallel orthonormal frame $\left\{e_1,..., e_{n-2}\right\}$ along $\sigma$ such that $e_1(t)= \sigma'(t)$. Consider the Jacobi fields $X_i, i=2, ..., (n-2)$  with $X_i(0)=0$ and $X_i(d(x))=e_i(d(x))$.\\
Choose $d_0 \in \left(0, d(x)\right)$, $x\in \Sigma^{n-2}\setminus \text{Cut}(p)$ such that $|Ric|\le \frac{n-3}{{d_0}^2}$ on $B_{d_0}(p)$ and $d_0< \text{inj}(p)$, where $\text{inj}(p)$ is the injectivity radius of $\Sigma^{n-2}$ at $p$ .\\ 
Constructing vector fields $Y_i$ as in \cite{JA} we get,
\begin{equation}\label{FE23}
    \Delta d(x)\le \int_0^{d_0} \left\{\frac{n-3}{{d_0}^2}- \frac{t^2}{{d_0}^2} Ric^{\Sigma}\left(\sigma'(t), \sigma'(t)\right)\right\} dt- \int_{d_0}^{d(x)} Ric^{\Sigma}\left(\sigma'(t), \sigma'(t)\right) dt,
\end{equation}
Substituing \eqref{FE1a} into \eqref{FE23} we get,
\begin{equation}
    \Delta d(x)\le \frac{n-3}{d_0}+\frac{2(n-3)}{3d_0}-\int_0^{d(x)} \left\{ (n-3)C+(n-2)g\left(\vec{H}_{\Bar{M}}^{\Sigma}, \gamma\right) B_{\Bar{M}}^{\Sigma}\left(\sigma', \sigma'\right)+ \sum_{i=1}^{n-2} B\left(\sigma', e_i\right)^2\right\} dt
\end{equation}
From \eqref{FE24} and \eqref{FE25} we come up with,
\begin{equation}
    \Delta d(x)\le \frac{5(n-3)}{3d_0}+\{2(n-2)^2 D^2+(n-3)|C|\} d(x),
\end{equation}
where $D^2= \frac{A+\left\{R^{\Sigma}-(n-2)(n-3)C\right\}}{(n-2)(n-3)}$.\\
Therefore,
\begin{equation*}
    \Delta d(x)\le E+F d(x)=G(d(x)), \hspace{2mm} \forall x\in \{\Sigma\setminus \text{cut}(p)\}\setminus K,
\end{equation*}
where $K=\left\{x\in \Sigma^{n-2}| d(x)\le d_0 \right\}$, $E=\frac{5(n-3)^2}{3d_0}>0$, $F= 2(n-2)^2D^2+(n-3)|C|>0$ and $G$ is a positive smooth function with $\frac{1}{G}\notin L^{1}(+\infty)$ and $G'(t)=F>0$.\\
Note that, $K$ is a compact subset of $\Sigma^{n-2}\setminus\text{Cut}(p)$ .\\
Hence, by applying Theorem $2.8$, Theorem $2.9$ and Remark $3.2$ of \cite{aliasbook} on $\Sigma^{n-2}\setminus \text{Cut}(p)$ we conclude that $\exists$ a sequence of points $\{p_k\}_{k}\in \Sigma^{n-2}$ such that $|\Phi_{\overline{M}}^{\Sigma}|^2 (p_k)>\sup |\Phi_{\overline{M}}^{\Sigma}|^2-\frac{1}{k}$ and $\Delta |\Phi_{\overline{M}}^{\Sigma}|^2 (p_k)< \frac{1}{k}$.\\
Hence, using claim 1 and claim 2, we conclude that on the sequence of points $\{p_k\}_{k}\in \Sigma^{n-2},$ $L(|\Phi^{\Sigma}_{\overline{M}}|^2)(p_k)<\frac{c}{k}$, where $c=2(n-2)D$.
\end{proof}

\section{\textbf{Proof of the main result}}

\subsection{Proof of Theorem \ref{thm1} }

From \eqref{FE4} we deduce that 
\begin{equation*}
    \left|\Phi^{\Sigma}_{\overline{M}}\right|^2=\left\{(n-2)(n-3)C-R^{\Sigma}\right\}+(n-2)(n-3) \left\{\overline{g}\left(\vec{H}^{\Sigma}_{\overline{M}}, \gamma \right)\right\}^2.
\end{equation*}
Throughout the proof, we denote the quantity $\overline{g}\left(\vec{H}^{\Sigma}_{\overline{M}}, \gamma \right)$ by $H^{\Sigma}_{\overline{M}}$.\\
Since the scalar curvature $R^{\Sigma}$ is constant \cite{CSY} and the operator $L$ is elliptic we get,
\begin{equation}\label{FE18}
    \frac{1}{2(n-3)} L\left(\left|\Phi^{\Sigma}_{\overline{M}}\right|^2\right)\ge \frac{1}{2} L\left\{(n-2)\left(H^{\Sigma}_{\overline{M}}\right)\right\}^2\ge H^{\Sigma}_{\overline{M}} L\left\{(n-2)H^{\Sigma}_{\overline{M}}\right\}.
\end{equation}
Now, from the definition of the linear operator $L$, we have
\begin{equation}\label{FE12}
    L\left\{(n-2)H^{\Sigma}_{\overline{M}}\right\}=\sum_{i, j} \left\{(n-2) H^{\Sigma}_{\overline{M}} \delta_{ij}-B_{ij}\right\}(n-2) \left(H^{\Sigma}_{\overline{M}}\right)_{ij}.
\end{equation}
Applying Lemma \ref{lemma2} to \eqref{FE12}, we obtain the following expression for $L\left((n-2)H^{\Sigma}_{\overline{M}}\right)$.

\begin{align}\label{FE14}
    L\left\{(n-2)H^{\Sigma}_{\overline{M}}\right\}=\sum_i (n-2)^2 H^{\Sigma}_{\overline{M}} \left(H^{\Sigma}_{\overline{M}}\right)_{ii}-\frac{1}{2} \Delta \left|B^{\Sigma}_{\overline{M}}\right|^2+ \left|\nabla B^{\Sigma}_{\overline{M}}\right|^2&+C(n-2)\left|\Phi^{\Sigma}_{\overline{M}}\right|^2-\sum_{i, j, k, m} (B_{ij} B_{km})^2 \nonumber\\
    &+(n-2)H^{\Sigma}_{\overline{M}} \sum_{i, j, m} B_{ij} B_{mi} B_{mj}.
\end{align}
On the other hand \eqref{FE4} gives,
\begin{align}\label{FE13}
    &\Delta \left|B^{\Sigma}_{\overline{M}}\right|^2= (n-2)^2\Delta \left(H^{\Sigma}_{\overline{M}}\right)^2, \nonumber\\
\implies &\frac{1}{2} \Delta \left|B^{\Sigma}_{\overline{M}}\right|^2= (n-2)^2\left\{|\nabla H^{\Sigma}_{\overline{M}}|^2+\sum_i H^{\Sigma}_{\overline{M}} \left(H^{\Sigma}_{\overline{M}}\right)_{ii}\right\}.
\end{align}
Putting \eqref{FE13} in \eqref{FE14} and using Lemma $2.1.$ of \cite{GL} we get,

\begin{equation}\label{FE15}
    L\left\{(n-2)H^{\Sigma}_{\overline{M}}\right\}\ge C(n-2)\left|\Phi^{\Sigma}_{\overline{M}}\right|^2-\sum_{i, j, k, m} (B_{ij} B_{km})^2+(n-2)H^{\Sigma}_{\overline{M}} \sum_{i, j, m} B_{ij} B_{mi} B_{mj}.
\end{equation}
Now by the construction of $\Phi^{\Sigma}_{\overline{M}}$ we get,
\begin{equation}\label{FE16}
    H^{\Sigma}_{\overline{M}} \sum_{i, j, m} B_{ij} B_{mi} B_{jm}=H^{\Sigma}_{\overline{M}} \Tr\left(\Phi^{\Sigma}_{\overline{M}}\right)^3+3 \left|\Phi^{\Sigma}_{\overline{M}}\right|^2(H^{\Sigma}_{\overline{M}})^2+(n-2)\left(H^{\Sigma}_{\overline{M}}\right)^4.
\end{equation}
 Using \eqref{FE16}, \eqref{FE4}, and the given hypothesis \ref{hyp} in \eqref{FE15} we get,
 \begin{align}\label{FE17}
     L\left\{(n-2)H^{\Sigma}_{\overline{M}}\right\}&\ge C(n-2)\left|\Phi^{\Sigma}_{\overline{M}}\right|^2-\left|\Phi^{\Sigma}_{\overline{M}}\right|^4-\frac{(n-2)(n-2-2k)}{\sqrt{(n-2)k(n-k-2)}}\left|\Phi^{\Sigma}_{\overline{M}}\right|^3H^{\Sigma}_{\overline{M}}+ (n-2) \left|\Phi^{\Sigma}_{\overline{M}}\right|^2 \left(H^{\Sigma}_{\overline{M}}\right)^2 \nonumber\\
     &= -\frac{(n-2)(n-2-2k)}{\sqrt{(n-2)k(n-k-2)}}\left|\Phi^{\Sigma}_{\overline{M}}\right|^3H^{\Sigma}_{\overline{M}}+\frac{4-n}{n-3}\left|\Phi^{\Sigma}_{\overline{M}}\right|^4+\frac{1}{n-3} \left|\Phi^{\Sigma}_{\overline{M}}\right|^2R^{\Sigma}.
 \end{align}
Adopting the notation of the paper \cite{AMP} and putting the value of $H^{\Sigma}_{\overline{M}}$ from \eqref{FE4}, we define,
\begin{align}
    Q_{R^{\Sigma}}\left(\left|\Phi^{\Sigma}_{\overline{M}}\right|\right)&= -\frac{(n-2)(n-2-2k)}{\sqrt{(n-2)k(n-k-2)}}\left|\Phi^{\Sigma}_{\overline{M}}\right|H^{\Sigma}_{\overline{M}}-\frac{n-4}{n-3}\left|\Phi^{\Sigma}_{\overline{M}}\right|^2+\frac{1}{n-3}R^{\Sigma},\nonumber\\
    Q_{R^{\Sigma}}\left(\left|\Phi^{\Sigma}_{\overline{M}}\right|\right)&= -\frac{(n-2-2k)}{\sqrt{(n-3)k(n-k-2)}}\left|\Phi^{\Sigma}_{\overline{M}}\right| \sqrt{\left|\Phi^{\Sigma}_{\overline{M}}\right|^2+\left\{R^{\Sigma}-(n-2)(n-3)C\right\}}-\frac{n-4}{n-3}\left|\Phi^{\Sigma}_{\overline{M}}\right|^2+\frac{1}{n-3}R^{\Sigma}.
\end{align}
Hence,
\begin{equation}\label{FE28}
     L\left\{(n-2)H^{\Sigma}_{\overline{M}}\right\}\ge \left|\Phi^{\Sigma}_{\overline{M}}\right|^2 Q_{R^{\Sigma}}\left(\left|\Phi^{\Sigma}_{\overline{M}}\right|\right).
\end{equation}
Combining \eqref{FE18} and \eqref{FE28} we have,
\begin{equation}\label{FE20}
    \frac{1}{2(n-3)} L\left(\left|\Phi^{\Sigma}_{\overline{M}}\right|^2\right)\ge \left|\Phi^{\Sigma}_{\overline{M}}\right|^2H^{\Sigma}_{\overline{M}}  Q_{R^{\Sigma}}\left(\left|\Phi^{\Sigma}_{\overline{M}}\right|\right).
\end{equation}
Suppose, $\sup\limits_{\Sigma}\left|\Phi^{\Sigma}_{\overline{M}}\right|^2=0$ then $\Sigma^{n-2}$ is totally umbilical.\\
Let $0<\sup\limits_{\Sigma} \left|\Phi^{\Sigma}_{\overline{M}}\right|^2< \infty$. 
Because of \eqref{FE4} we can write 
\begin{equation}
    \frac{1}{2(n-3)} L\left(\left|\Phi^{\Sigma}_{\overline{M}}\right|^2\right)\ge \left|\Phi^{\Sigma}_{\overline{M}}\right|^2 \sqrt{\left|\Phi^{\Sigma}_{\overline{M}}\right|^2+\left\{R-(n-2)(n-3)C\right\}}  Q_{R^{\Sigma}}\left(\left|\Phi^{\Sigma}_{\overline{M}}\right|\right).
\end{equation}
Therefore, by Proposition \ref{prop1} we can say that there exists a sequence of points $\{p_k\}_k\subset \Sigma^{n-2}$ such that for every $k\in \mathbb{N}$
\begin{equation}
    \frac{c}{k}>\frac{1}{2(n-3)} L\left(\left|\Phi^{\Sigma}_{\overline{M}}\right|^2\right)(p_k)\ge \left|\Phi^{\Sigma}_{\overline{M}}\right|^2(p_k) \sqrt{\left|\Phi^{\Sigma}_{\overline{M}}\right|^2(p_k)+\left\{R-(n-2)(n-3)C\right\}}  Q_{R^{\Sigma}}\left(\left|\Phi^{\Sigma}_{\overline{M}}\right|\right)(p_k).
\end{equation}
By taking $k\rightarrow \infty$ and due to $R^{\Sigma}>(n-2)(n-3)$ we get,
\begin{equation}\label{FE19}
     Q_{R^{\Sigma}}\left(\sqrt{\sup_{\Sigma}\left|\Phi^{\Sigma}_{\overline{M}}\right|^2}\right)\le 0.
\end{equation}
Using the argument of Lemma $2.7.$ and Remark $2.8$ of \cite{AMP}, we can say that if $2\le k\le (n-4)$ then the function $Q_{R^{\Sigma}}$ is a decreasing function for all $x\ge 0$ and has only one positive root, say $x^{*}$. Moreover $x^{*}$ is the expression $\sqrt{\alpha}$ written in the expression $(1.2)$ of \cite{AMP} with replacement of $n-2$ instead of $n$ and $\frac{R^{\Sigma}}{(n-2)(n-3)}$ instead of $R$. Hence, from \eqref{FE19} we can say that 
\begin{equation}
    \sup_{\Sigma} \left|\Phi^{\Sigma}_{\overline{M}}\right|^2\ge (x^{*})^2= \alpha\left(R^{\Sigma}, n-2, k, C\right).
\end{equation}
Now, Suppose $\sup\limits_{\Sigma} {\left|\Phi^{\Sigma}_{\overline{M}}\right|^2}= \alpha\left(R^{\Sigma}, n-2, k, C\right)$ and $\sup$ is attained at some point of $\Sigma^{n-2}$. Since $Q_{R^{\Sigma}}$ is a decresing function for all $x\ge 0$ then from \eqref{FE18} and \eqref{FE28} we get,
\begin{equation}
     \frac{1}{2(n-3)} L\left(\left|\Phi^{\Sigma}_{\overline{M}}\right|^2\right) \ge H^{\Sigma}_{\overline{M}}L\left((n-2)H^{\Sigma}_{\overline{M}}\right)\ge 0.
\end{equation}
Using Lemma \ref{lemma3} we conclude that $\left|\Phi^{\Sigma}_{\overline{M}}\right|^2$ and $H^{\Sigma}_{\overline{M}}$ are constant on $\Sigma^{n-2}$. Hence, from \eqref{FE4} $\left|B^{\Sigma}_{\overline{M}}\right|$ is constant on $\Sigma$. Consequently, \eqref{FE15} becomes an equality. Hence, \eqref{FE14} and \eqref{FE13} implies, $\nabla B^{\Sigma}_{\overline{M}}=0$ which implies $\Sigma^{n-2}$ is a parallel submanifold of co-dimension $2$ in $\overline{M}^n(C)$. 
\qed
\printbibliography



\end{document}